\newcommand{\di}{\displaystyle} \newcommand{\al}{\alpha}
 \newcommand{\be}{\beta}
\newcommand{\ti}{\times}
\newcommand{\F}{\mathbb{F}}
\title{Algebraic constructions of  LDPC codes with no short cycles.}
\date{} 
\author{Ted Hurley\thanks{National University of Ireland, Galway, Ireland.
ted.hurley@nuigalway.ie }\and 
{Paul McEvoy \thanks{TechnologyFromIdeas, Old Kilmeaden Road,
Waterford,
Ireland. paul.mcevoy@technologyfromideas.com}
}\and 
{Jakub Wenus \thanks{TechnologyFromIdeas, Old Kilmeaden Road,
Waterford,
Ireland. jakub.wenus@technologyfromideas.com}
}
}
\begin{document}
\maketitle

\begin{abstract} An algebraic group ring method for constructing codes 
with no   short cycles in the check matrix  is derived.
It is shown that the matrix of a group ring element has no short
cycles if and only if the collection of group differences of this
element has no repeats.  
When applied to elements in the group ring with small support this
gives a general method for constructing and analysing 
low density parity check (LDPC) 
 codes with no short cycles from group rings. 
Examples of LDPC codes with no short cycles 
are constructed from group ring elements  
and these are simulated and compared
with known LDPC codes, including those adopted for wireless standards.

\end{abstract}  

\section{Introduction}

An LDPC code is a code where the check matrix has only a small number
of non-zero elements in each row and column. These were introduced by 
Gallager in \cite{gallager}, 
 expanded further by Tanner in \cite{tanner},
and rediscovered \cite{neal1,neal}  
 and expanded on by MacKay and Neal;
details can now be found in \cite{mackay}. Structured LDPC codes
usually use various  types of
combinatorial objects such as designs or algebraic geometry.  
  LDPC codes have often been produced
by randomised techniques, but there has
been recent activity in the area of algebraic
constructions~\cite{tang04-algebraicldpc, milenkovic04blockcirculantldpc,
tanner2004-qcldpc}. 
Having no
short cycles in the (Tanner) graph of the check matrix of an LDPC code 
has been shown to dramatically improve the performance of
the code. Short cycles in an LDPC code 
 deteriorate the performance of the decoding algorithms and having no
 short cycles may in effect increase the distances of such codes.

Here a group ring method for the construction and analysis of LDPC
codes with no short cycles is presented.

\subsection{Group ring method} 
Cyclic and related 
codes owe much of their structure and properties because they  
 occur as ideals or modules within a cyclic group ring. 
 So also general group rings may be used to  
construct, analyse and give structure to 
many other types of codes and  may be used to construct codes of a
particular type or with a particular property. 
  Group ring module codes are  obtained from either zero-divisors or
units in a group
ring $RG$ as described in \cite{hur} or \cite{hur1}. 
Thus elements $u,v \in RG$ are considered with either $uv =
0$ (zero divisors) or $uv = 1$ (units), 
 where $1$ denotes the identity of $RG$, and   codes are derived therefrom.

The unit-derived group ring code method is particularly useful and has
great 
flexibility while still retaining much  of the algebraic structure. This
method is employed here to directly and
algebraically construct low
density parity check (LDPC) codes with no short cycles in their
(Tanner) graphs. Zero-divisors
may also be used but the 
unit-derived method  has advantages and has 
less theoretical complications. 

 Using an injection $\phi: RG
 \rightarrow R_{n\ti n}$,  as for
example in \cite{hur3}, from the group ring $RG$ with $|G|=n$  into the
ring of $n\ti n$ matrices over $R$, corresponding matrix codes are
obtained from the group ring codes. In this injection the notation 
$\phi(v) = V$ is used  
so that the capital letter $V$ in the matrix ring corresponds to the
 lower case letter $v$ in the group ring. 

Thus we are lead to consider  $u,v \in RG$ with $uv= 1= vu$ from which the
codes are defined. Certain rows are chosen from the matrix  $U$ of the
unit $u$ to
form the generator matrix of a code and
the corresponding columns are deleted from the matrix $V$ of the
inverse $v$ of $u$ to form the
check matrix of this code.  

If $v$  has short support then its corresponding matrix
$V$ has only a small number of elements in each row and column and is
thus low density. `Short support' of a group ring element $v$ means that
only a small number (compared to the size of the group) of the
coefficients in $v$ are non-zero.   
If  the  check matrix  of a group ring code 
is derived  from a group ring check element  $v$ 
with small support  
then  the resulting code will be an LDPC code. 

It is  determined here where precisely the short cycles can occur
in the matrix of a  group ring element.  It is
then easy to 
construct group ring elements, and group ring elements of small
support, 
which will have no short cycles anywhere
in their 
matrices. Using  group ring elements with small 
support and with no short cycles in their matrices,  LDPC codes with no short
cycles are constructed. 

Thus LDPC codes with no short cycles can be constructed by the
following algebraic group ring method: 
\begin{enumerate} \item  Construct $u,v$ is a group ring $RG$ with
 $uv = 1$ and such that $v$ has small support compared to the size
  $|G|$ of the group $G$.  These can be
  constructed with some property in mind. Units in group rings abound
  and are easy to construct. \item  Decide on the rate $\frac{r}{|G|}$ of the
  code required. This is often
decided by reference to $u,v$ and their structures. \item Choose $r$
rows of $U$ with which to construct the generator matrix and delete
the corresponding $r$ columns of $V$ to form the check matrix. This
  gives a rate $\frac{r}{|G|}$ code.  \item
If $V$ has no short cycles at all, which can be ensured by
\thmref{thm2} below, 
then any choice of columns  of $V$ and consequent choice of rows of $U$
will give an LDPC code with no short cycles. \item If $V$ has short
cycles,  it
may be possible to avoid these by deleting appropriate columns 
and still obtain an LDPC code with no short cycles. \end{enumerate} 

It is of obviously easier to ensure there are no short cycles in the
resulting LDPC codes if the original (check) 
element $v$ from which the code is constructed has no short cycles at
all in its matrix $V$. This can be ensured in the construction of $v$ 
by \thmref{thm2} below.

Another advantage of codes derived from units is that there is a huge
choice of columns from which  to form a check 
matrix of a code.  
For example suppose from a unit  of size 
 $1000\ti 1000$ with no short cycles and low density  a $(1000,500)$
 code  is required.  There are $\binom{1000}{500}$, which is 
of the order of 
$2^{500}$, choices from the $1000\ti 1000$ matrix with which to form
 the code and each code is  low density and has no
short cycles.  From the nature of the independence of any set of rows
(or columns) in a unit (= non-singular) matrix each code derived is
different.  

One could thus 
envisage a hybrid whereby a random construction is performed within the
parameters of an algebraic construction.

\subsection{Examples and simulations} 
Examples, simulations and comparisons are given in \sref{sims}. 
These compare extremely well with
existing LDPC codes and in many cases outstrip them. An example is
also given of a unit from which $10$ random LDPC codes of rate $1/2$ are
constructed. These are then simulated and all of them performed
well. The potential applications of having random LDPC codes with no
short cycles derived
from a single unit and all performing well are obvious.  
In addition  comparisons of Bit Error Rate (BER) and Block Error Rate (BLER)
performance of LDPC codes defined in the 802.11n \& 802.16e standard
with equivalent codes generated by the present  method are given in
\sref{tfi}.

BER is not everything and often fast and
power-efficient coding is more important than performance. 
The group ring  method for LDPC codes needs only a relatively few
initial parameters and can re-create the matrix line-by-line without
the need to store the whole structure in memory. 
The method has thus 
in addition applications where low power and low storage are
requirements.  

\subsection{Notation} 

$RG$ denotes the group ring of the group $G$ over the ring $R$; when
$R$ is a field, $RG$ is often referred to as a {\em group algebra}. No deep
knowledge of group rings is required but familiarity with the ideas of
{\em units, zero-divisors} in rings is assumed. For further information
on group rings see \cite{sehgal}. $C_n$ denotes the cyclic group of
order $n$ and $H\ti K$ denotes the direct product of groups $H,K$. 

The words `graph' and `short cycles' is used but as now explained no
knowledge of graph theory is required and the problem of avoiding
short cycles reduces to looking at a property of matrices.

For any matrix $H = (h_{ij})$ the Tanner graph \cite{tanner} of $H$ is
  a bipartite graph $K = V_1 \cup V_2$ where $V_1$ has one vertex for
  each row of $H$ and $V_2$ has one vertex for column in $H$ and there
  is an edge between two vertices $i,j$ exactly when $h_{ij} \not
  = 0$. A short cycle in the (Tanner) graph of a matrix is a cycle of
  length $4$. 

Thus a matrix has no short cycles in its graph if and only 
the intersection of positions
  in which two columns have non-zero values is at most
  $1$. This definition is used when considering the absence or
  otherwise of short cycles and thus no deep graph theory is involved.   

\section{Avoiding short cycles} 

Avoiding short cycles in the (Tanner) graph of the check matrix of a code is
important, particularly for LDPC (low density parity check) codes.
 
Specifically here, 
   necessary and sufficient conditions are given on the  group ring
 element $v$ in terms of  
 the group elements  with non-zero coefficients occurring in it 
  so that its corresponding matrix $V$ has no short cycles. 
A mathematical proof is  given.

Some special cases,  such as when $G$ is cyclic
or abelian, of the general result are easier to describe and useful in 
practice and these are used as examples and illustrations of the general
results.

\subsection{Collections of differences, special case}
Collections of differences are usually defined with respect to a set a
non-negative integers, see for example \cite{lint}. Collections of
group differences are  defined in \sref{general} and the collections
of (integer) differences are special cases of these when the group is a
cyclic group.

The integer definition is recapped here and used to give 
examples of the general definition.
  
Let $S = \{i_1,i_2, \ldots, i_r\}$ be a set of non-negative unequal integers
and $n$ an integer with $n > i_j$ for all $j=1, 2, \ldots, r$.

Then the {\em collection of cyclic differences of $S \mod n$} is
defined by $DS(n) = \{
i_j - i_k \mod n | 1\leq j,k \leq r, j\not = k\}$. This collection has 
possibly repeated elements.

For example if $S=\{1,3,7,8\}$ and $n = 12$ then $DS(12) =
\left\{\begin{array}{rrr} 2 & 6 & 7\\ &4&5 \\ &&1 \\ 10 & 6 & 5 \\ &8&7 \\
&&11 \end{array}\right\} =  
\{2,6,7,4,5,1,10,6,5,8,7,11\}$. In this case $6,7,5$  occur twice.

If $|S| = r$ then counting repeats $|DS(n)| = r(r-1)$.

\subsection{Cyclic group ring differences}\label{sec:cyclic1}
Consider the group ring $RC_n$ where $C_n$ is the cyclic group of
order $n$ generated by $g$. 
Suppose  $u=
\al_{i_1}g^{i_1} + \al_{i_2}g^{i_2} + \ldots + \al_{i_r}g^{i_r} \in RC_n$ with
$\al_{i_j} \not = 0$ (and $0\leq i_j< n$).

For each $g^i,g^j$ in $u$ with non-zero coefficients form $g^ig^{-j},
g^jg^{-i}$ and define $DS(u)$ to be the collection of all such $g^ig^{-j},
g^jg^{-i}$. 

Set $S= \{i_1, i_2 , \ldots, i_r\}$ and define the collection of
cyclic differences $DS(n)$ as above. It is clear that $DS(n)$ and $DS(u)$ are 
equivalent, the
only difference being in the notation used. 
The proof of the following theorem is a direct corollary of the more
general \thmref{thm1} below.

\begin{theorem}\label{thm:thm2} $U$ has no 4-cycles in its graph if and only if
  $DS(u)$ has no repeated elements.
\end{theorem}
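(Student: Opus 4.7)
The plan is to translate the 4-cycle condition on the circulant matrix $U$ into an arithmetic statement on the exponent set $S=\{i_1,\dots,i_r\}$ and show this is exactly the statement that $DS(u)$ has a repeat. Under the standard embedding $\phi\colon RC_n\to R_{n\times n}$, the matrix $U=\phi(u)$ is a circulant whose $(i,j)$-entry is $\alpha_{j-i\bmod n}$; in particular $U_{i,j}\neq 0$ iff $j-i\bmod n\in S$.

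Next I would unpack what a 4-cycle means in this setting. By the characterization in the Notation subsection, $U$ has a 4-cycle precisely when there exist two distinct rows $i^{(1)}\neq i^{(2)}$ and two distinct columns $j^{(1)}\neq j^{(2)}$ with all four entries $U_{i^{(a)},j^{(b)}}$ nonzero. Setting $s_{ab}=j^{(b)}-i^{(a)}\bmod n$, this is equivalent to producing four elements $s_{11},s_{12},s_{21},s_{22}\in S$ satisfying the single linear relation $s_{11}-s_{12}\equiv s_{21}-s_{22}\pmod n$ together with the non-degeneracy conditions $s_{11}\neq s_{12}$, $s_{21}\neq s_{22}$ (from $j^{(1)}\neq j^{(2)}$) and $s_{11}\neq s_{21}$, $s_{12}\neq s_{22}$ (from $i^{(1)}\neq i^{(2)}$).

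For the forward direction, the pairs $(s_{11},s_{12})$ and $(s_{21},s_{22})$ are two distinct ordered pairs in $S\times S$ (they differ in the first coordinate) both contributing the common value $s_{11}-s_{12}$ to $DS(u)$, so $DS(u)$ has a repeated element. For the converse, assume $DS(u)$ has a repeat, i.e.\ $i_a-i_b\equiv i_c-i_d\pmod n$ with $i_a\neq i_b$, $i_c\neq i_d$ and $(i_a,i_b)\neq(i_c,i_d)$. A short case check shows $i_a=i_c$ would force $i_b=i_d$ and collapse the two pairs; hence $i_a\neq i_c$ and, symmetrically, $i_b\neq i_d$. I then build an explicit 4-cycle by taking rows $i^{(1)}=0,\ i^{(2)}=i_a-i_c$ and columns $j^{(1)}=i_a,\ j^{(2)}=i_b$, and verifying directly that the four differences are $i_a,i_b,i_c,i_d\in S$.

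The main obstacle is the bookkeeping in the backward direction: a repeat in the collection $DS(u)$ could in principle arise from two ordered pairs that overlap in one coordinate, and one has to rule this out so that the constructed rows and columns are genuinely distinct. Once the "$i_a\neq i_c$" step is in place the rest is a direct verification, so the proof reduces to careful index tracking rather than any deeper structural argument, consistent with the fact that the theorem is a special case of the more general \thmref{thm1}.
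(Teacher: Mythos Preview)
Your argument is correct. The paper, however, does not prove this statement directly: it simply records it as the cyclic specialisation of the general \thmref{thm1} and defers the entire argument to that theorem, whose proof works with arbitrary group elements and the rows $ug_i$ of $U$. You instead give a self-contained proof in the cyclic case, exploiting the explicit circulant indexing $U_{i,j}=\alpha_{j-i\bmod n}$ to reduce the $4$-cycle condition to an additive equation $s_{11}-s_{12}\equiv s_{21}-s_{22}$ on exponents, and then you explicitly exhibit the rows and columns of a $4$-cycle from a repeated difference. The underlying idea (a $4$-cycle is the same data as two ordered pairs in the support with a common difference) is identical in both approaches; what you gain is an elementary, coordinate-level argument that avoids the general group-ring machinery and makes the non-degeneracy bookkeeping ($i_a\neq i_c$, $i_b\neq i_d$) completely transparent, while what the paper gains is economy: one proof covers all groups at once.
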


\subsubsection{Example}
Set $u= 1 + g + g^3+ g^7$ in $\Z_2C_{15}$. The collection of
differences is formed from $\{0,1,3,7\}$ and is thus  

$DS(u) =
\{1,3,7,2,6,4,14,12,8,13,9,11\}$ 
which has no repeats. Hence  the matrix
formed from $u$, which is circulant in this case,  has no short cycles.

Set $u = 1 +g+ g^3+g^7$ in $\Z_2C_{13}$. 

The collection of differences formed from $\{0,1,3,7\}$ is
$\{1,3,7,2,6,4, 12,10,6,11,7, 9\}$ and has repeats $6,7$. 

Thus the matrix formed
  from $u$ has short cycles -- but we can identify where they occur.

\subsection{Collection of differences in a general group
  ring}\label{sec:general} 
Let $RG$ denote the group ring of the group $G$ over the ring $R$.
Let $G$ be  listed by $G = \{g_1, g_2, \ldots, g_n\}$.

Let $u = \di\sum_{i=1}^{n} \al_ig_i$ in $RG$.

For each (distinct) pair $g_i,g_j$ occurring in $u$ with non-zero 
coefficients, form the (group) {\em differences} $g_ig_j^{-1},
g_jg_i^{-1}$. Then {\em the collection of difference} of $u$, $DS(u)$, 
consists of all such differences. Thus: 

$DS(u) = \left\{ g_ig_j^{-1}, g_jg_i^{-1} | g_i \in G, g_j\in G, 
i \not = j, \al_i \not =
0,  \al_j\not =
0\right\}$.

Note that the collection of differences  of $u$ consists of group
elements and for
 each $g,h$, $g\not = h$, occurring with non-zero coefficients in $u$ 
both $gh^{-1}$ and its inverse $hg^{-1}$ are formed
 as part of the collection of differences.

\begin{theorem}\label{thm:thm1}
   
The matrix $U$ has no short cycles in its graph if and only if 
$DS(u)$ has no repeated (group) elements.
\end{theorem}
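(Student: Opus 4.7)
The plan is to reduce both directions of the biconditional to an explicit computation in $G$, using the standard regular-representation form of the injection $\phi : RG \to R_{n \times n}$. With the listing $G = \{g_1, \ldots, g_n\}$, the entry $U_{ij}$ is the coefficient of $g_i g_j^{-1}$ in $u$, so $U_{ij} \neq 0$ if and only if $g_i g_j^{-1} \in \mathrm{supp}(u)$. (Consistency check: in the cyclic example, with rows and columns indexed by $g^0,\ldots,g^{n-1}$, this places the coefficient of $g^{i-j}$ at position $(i,j)$, which is exactly the circulant matrix used there.) Combined with the characterisation from the notation subsection that a short cycle is precisely a pair of distinct columns sharing two distinct nonzero rows, the theorem becomes: there exist $j_1 \neq j_2$ and $k_1 \neq k_2$ with $g_{j_1}g_{k_1}^{-1},\,g_{j_1}g_{k_2}^{-1},\,g_{j_2}g_{k_1}^{-1},\,g_{j_2}g_{k_2}^{-1}$ all in $\mathrm{supp}(u)$, if and only if $DS(u)$ has a repeat. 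I would prove this reformulated biconditional.

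For the forward direction, given such a short cycle abbreviate the four support elements as $a = g_{j_1}g_{k_1}^{-1}$, $b = g_{j_1}g_{k_2}^{-1}$, $c = g_{j_2}g_{k_1}^{-1}$, $d = g_{j_2}g_{k_2}^{-1}$. A direct cancellation gives $ac^{-1} = g_{j_1}g_{j_2}^{-1} = bd^{-1}$. The conditions $j_1 \neq j_2$ and $k_1 \neq k_2$ force $a \neq c$, $b \neq d$ and $a \neq b$, so $(a,c)$ and $(b,d)$ are distinct ordered pairs of distinct support elements producing the same group difference, i.e.\ a repeat in $DS(u)$.

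For the reverse direction, suppose $DS(u)$ contains a repeat coming from distinct ordered pairs $(a,c) \neq (b,d)$ of support elements with $a \neq c$, $b \neq d$ and $ac^{-1} = bd^{-1}$. Pick any $g_{k_1} \in G$ (for instance the identity) and set $g_{j_1} := a g_{k_1}$, $g_{j_2} := c g_{k_1}$ and $g_{k_2} := b^{-1} g_{j_1}$. Then by construction $g_{j_1}g_{k_1}^{-1} = a$, $g_{j_1}g_{k_2}^{-1} = b$ and $g_{j_2}g_{k_1}^{-1} = c$, while the fourth product is $g_{j_2}g_{k_2}^{-1} = ca^{-1}b$, which equals $d$ exactly because $ac^{-1} = bd^{-1}$; hence all four matrix positions are nonzero. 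The distinctness $j_1 \neq j_2$ follows from $a \neq c$, and $k_1 \neq k_2$ from $a \neq b$; the last inequality is forced by $(a,c) \neq (b,d)$ together with $ac^{-1} = bd^{-1}$, since $a = b$ would give $c = d$ after cancellation.

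The main hazard is algebraic rather than conceptual: because $G$ need not be abelian, the identity $ac^{-1} = bd^{-1}$ does \emph{not} simplify to the familiar $ad = bc$, but to $ca^{-1}b = d$, and it is exactly this rearrangement that closes the construction in the $(\Leftarrow)$ direction. Once the entry rule for $\phi$ and this rearrangement are in hand, both implications collapse to a few lines, and the cyclic/abelian special cases reduce to the integer collection-of-differences condition stated earlier.
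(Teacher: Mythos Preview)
Your proof is correct and follows essentially the same approach as the paper: translate nonzero matrix entries into support conditions via the regular representation, and identify a $4$-cycle with four support elements whose pairwise differences coincide. Your treatment is in fact more complete than the paper's, which handles the forward direction in the same way but leaves the converse (reconstructing $i,j,k,l$ from a repeat in $DS(u)$) as an unargued ``if and only if''; your explicit construction of the indices and your care with the non-abelian rearrangement $d = ca^{-1}b$ fill that gap.
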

\begin{proof}
 The rows of $U$ correspond in order to $ug_i, i= 1,
  \ldots, n$, see \cite{hur}.

Then $U$ has a 4-cycle 

$\Longleftrightarrow$ 

for some $i\not = j$ 
and some $k \not = l$, the  coefficients of $g_m, 
g_l$, in $ug_i$ and $ug_j$ are nonzero.

$\Longleftrightarrow$

$ug_i = ... + \al g_k + \be g_l + \ldots $

and 

$ug_j = ... + \al_1g_k + \be_1 g_l + \ldots$ 

$\Longleftrightarrow$ 
 
$u = ...+\al g_{k}g_i^{-1} + \be g_{l}g_i^{-1} + \ldots $

and 

$u = ...+\al_1g_{k}g_j^{-1} + \be_1 g_{l}g_j^{-1} + \ldots$.

$\Longleftrightarrow$

 $DS(u)$ contains both $g_kg_i^{-1}g_ig_l^{-1} = g_kg_l^{-1}$ 
 and $g_kg_j^{-1}g_l^{-1} = g_kg_l^{-1}$.
 
This happens if and only if $DS(u)$ has a repeated element.
\end{proof}

\subsection{Repeated elements}{\label{sec:rep}}
Suppose now $u$ is such that $DS(u)$ has repeated elements.

Hence $u = ... + \al_mg_m + \al_rg_r + \al_pg_p + \al_qg_q + ...$, 
where the displayed $\al_i$ are not zero, so
that $g_mg_r^{-1} =
g_pg_q^{-1}$. The elements causing a short
cycle are displayed and  note that the elements $g_m, g_r, g_p, g_q$
are not necessarily in the order of the listing of $G$.  

Since we are interested in the graph of the
element and thus in the non-zero coefficients, replace a  non-zero
coefficient by the coefficient 1. 
 Thus write $u = ... + g_m + g_r + g_p + g_q + ...$ so that $g_mg_r^{-1} =
g_pg_q^{-1}$.  

Include the case where one  $p, q$ could be one of $m,r$ in which
case it should not be listed in the expression for $u$.

Then  $ug_m^{-1}g_{p} = ..+ g_p + g_{r}g_m^{-1}g_p.+... =
  ...+ g^p + g^q + ..$

 and  $ug_p^{-1}g_m = ....+
  g_m +  g_{q}g_p^{-1}g_m = ... + g_m + g_r+ ...$.

(Note that  $ug_m^{-1}g_{p} =
  ug_r^{-1}g_{q} $ and  $ug_p^{-1}g_m = ug_q^{-1}g_r$)

Thus to avoid short cycles, do not use the  row determined by
$g_m^{-1}g_p$  or the row determined by $g_p^{-1}g_m$ in $U$ 
if using the first row or
in general if $g_i$ row
occurs then $g_ig_m^{-1}g_p$, and $g_ig_p^{-1}g_m$  rows must not occur.
 
Similarly when $DS(u)$ has repeated elements by avoiding certain
columns in $U$, it is possible to finish up with a matrix without
short cycles.

\subsection{Special group cases}


The special case when $G=C_n$ was dealt with in \sref{cyclic1}.

Let $G=C_n\times C_m$ be the direct product of cyclic groups $C_n,
C_m$  generated by $g,h$ respectively. These groups are particularly
useful in practice. 

List the elements of $G$ by $\{1, g, g^2, \ldots, g^{n-1}, h, hg,
hg^2, \ldots, hg^{n-1}, \ldots, h^{m-1}, h^{m-1}g, \ldots,
h^{m-1}g^{n-1}\}$.
  
Then every element in  $RG$ is of the form $u= a_0  + ha_1+ \ldots +
h^{m-1}a_{m-1}$  with each $a_i \in C_n$. 
 The collection  of
differences of $u$ is easy to determine and elements with no repeats
in their  collection of
differences are thus easy to construct.

Relative to this listing the matrix of an element in $RG$ is a
circulant-by-circulant matrix of size $mn\times mn$, \cite{hur3}.

Another particularly useful group which is relatively easy to work
 with is the 
 dihedral group $D_{2n}$ given by 
$\langle a, b | a^2=1, b^n = 1, ab = b^{-1}a \rangle $, \cite{sehgal}. This
 group is non-abelian for $n\geq 3$. 
Every element $u$ in $RD_{2n}$ may be written as $u= f(b) + ag(b)$
 with $f(b),g(b) \in RC_n$ where $C_n$ is generated by $b$. The collection of
 differences of $u$ is easy to determine. The
 corresponding matrix $U$ of $u$ has the form $\left(\begin{array}{rr} A & B
 \\ B & A\end{array}\right)$ where $A$ is circulant and $B$ is reverse
 circulant, \cite{hur3}. This 
 gives non-commutative matrices and non-commutative codes.

\section{Examples, simulations and comparisons}\label{sec:sims}

 In this section examples and simulations of the method are given 
and some comparisons are made with known  codes.
The sizes of the examples are chosen  in order to compare with known 
 examples. However there is no theoretical limit on size, the
 constructions are easy to perform and there is complete freedom as to
 choice of rows or columns to delete from a particular unit in order 
obtain LDPC codes with no short cycles. 

The simulations compare very favourably
 with known  examples and in some cases outstrip these.

This  algebraic method for
 construction  has other advantages such as for applications
 where low storage and low power are requirements. 
  The code may be
 stored by an algebraic formula with few parameteres 
and the check matrix restored as
 required line-by-line without the need to store the whole structure
 in memory.  

\subsection{The examples generally} 
In general  the examples  are taken from unit-derived codes within 
$\Z_2(C_n\ti C_4)$, where $\Z_2 = \F_2$ is  the field of two elements. 

The matrices derived are then submatrices of circulant-by-circulant
matrices and are easy to program.

Assume that $C_n$ is generated by $g$ and  $C_4$ is
generated by $h$. 

Every element in the group ring is then of the form:
$\di\sum_{i=0}^{n-1}(\al_ig^i + h\be_ig^i + h^2\gamma_ig^i +
h^3\delta_ig^i)$,
with $\al_i,\be_i,\gamma_i,\delta_i \in \Z_2$.

\subsection{(96,48) examples}\label{sec:96}
 These are derived from   $\Z_2(C_{24}\ti C_4)$. 

The check element
$v = g^{24-9} + g^{24 - 15} +g^{24-19} + hg^{24-3} + hg^{24-20}+
   h^2g^{24-22} + h^3g^{24-22} + h^3g^{24-12}$ is used to define the
     LDPC code TFI-96-59-8. 

It is easy to check from \thmref{thm1} that  $v$ has  no short cycles in
its matrix $V$.

A pattern to delete half the columns from the matrix $V$ of $v$ 
 is  chosen  to produce the rate $1/2$ code TFI-96-59-8 . 

TFI-96-59-8 is compared to pseudo-random code  
MK-96-33-964 (size=96, rate=1/2) of MacKay \cite{mak}.

\includegraphics[scale=.5]{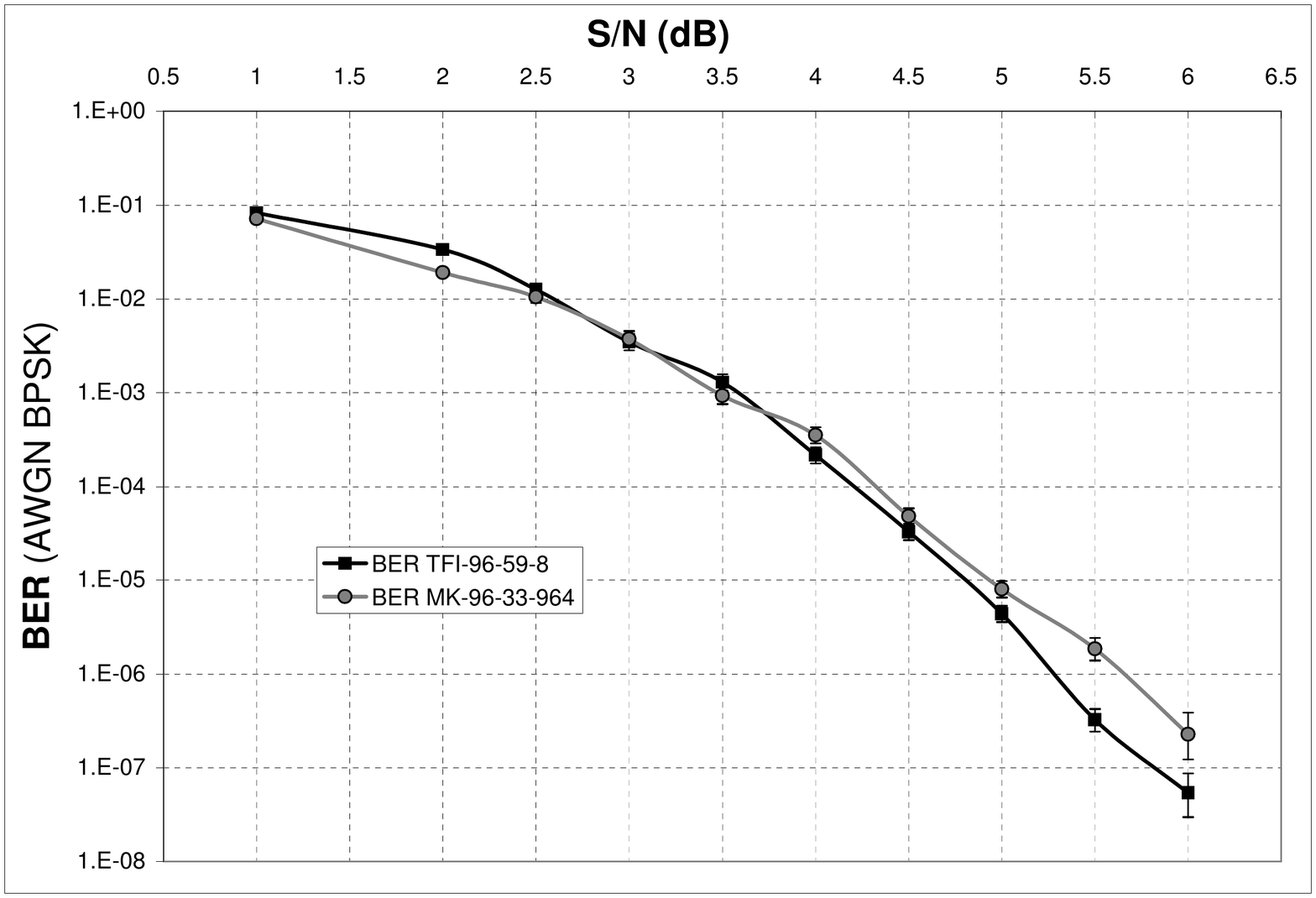}

\subsection{Random selection}
 
\includegraphics[scale=.7]{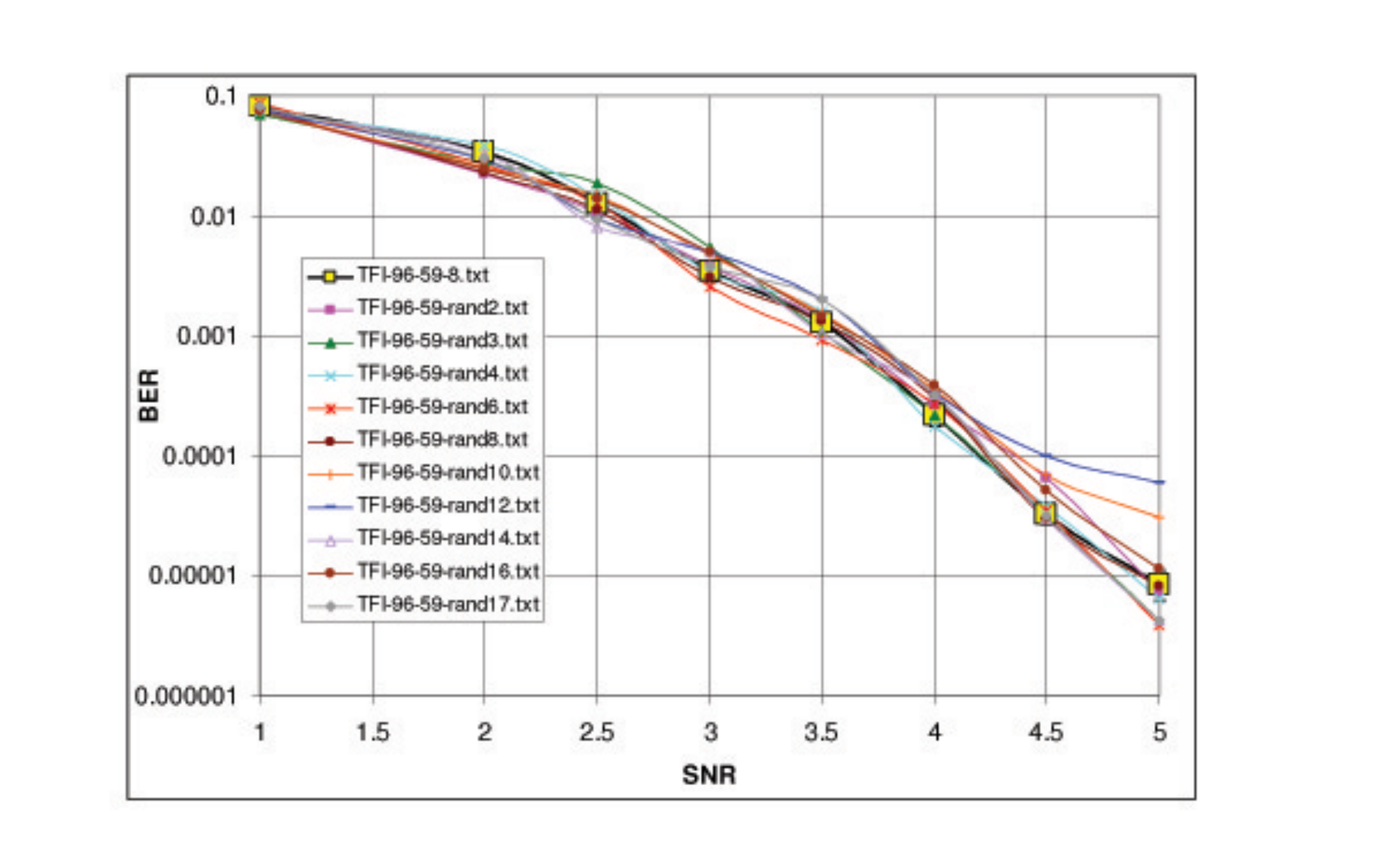}

For the above, $10$ random LDPC $(96, 49)$ codes were 
taken  from the unit $v$ in \sref{96} and simulated.
The simulation of TFI-96-59-8, used in the previous graph where it is
compared to MK-96-33-964, is included for comparison.

\subsection{(504,252) example}
The next example is derived  from $\Z_2(C_{126}\ti C_4)$.

$v = g^{126-10} + g^{126-99} + hg^{126-47}
 +h^2(g^{126-15} + g^{126-25} + g^{126-81}) + h^3(g^{126-6}+g^{126-23}
 + g^{126-64})$.

Specific column deletions are chosen from $V$ 
 to give the    
LDPC
rate $1/2$ code TFI-504-91-0. 
The performance of TFI-504-91-18 is compared to that of 
PEGReg252x504 Progressive Edge Growth, Xiao-Yu Hu,
IBM Zurich Research Labs. 

\includegraphics[scale=.326]{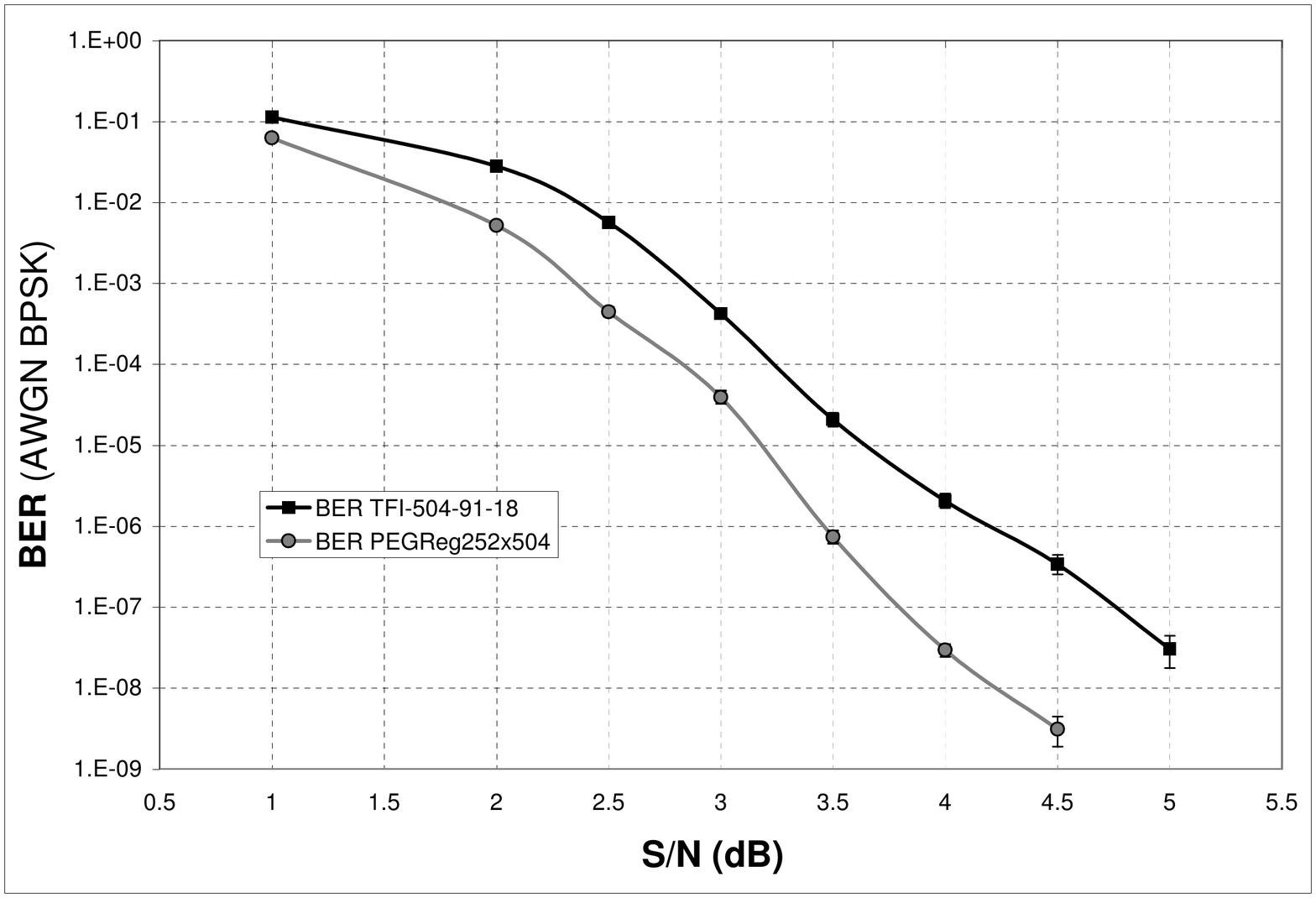}

\subsection{Size: 816; rate =1/2 and 3/4}\label{sec:ref}

Here $\Z_2(C_{204}\ti C_4)$ is used. Set  

 $ v=g^{204-75}+ h(g^{204-13} + g^{204-111} + g^{204-168})
+h^2(g^{204-29} + g^{204-34}+g^{204-170}) + h^3(g^{204-27} +
g^{204-180})$.

Half the columns of $V$ are deleted in a specific manner to get 
 the TFI-816-0p5-29-4 rate 1/2 code.

The same  $v$ is taken and specific three
quarters of the columns of $V$ 
are deleted to get the $3/4$ rate $(816,612)$ LDPC
code TFI-816-0p75-29-4.

In the first graph the performances of 
 TFI-816-0p5-29-4 and TFI-816-0p75-29-4 are
compared.

In the second graph the performances of TFI-816-0p5-29-4  and 
 MK-816-55-156, a pseudo-random rate $1/2$ code due to MacKay,
 \cite{mak}, are compared.

\includegraphics[scale=.5]{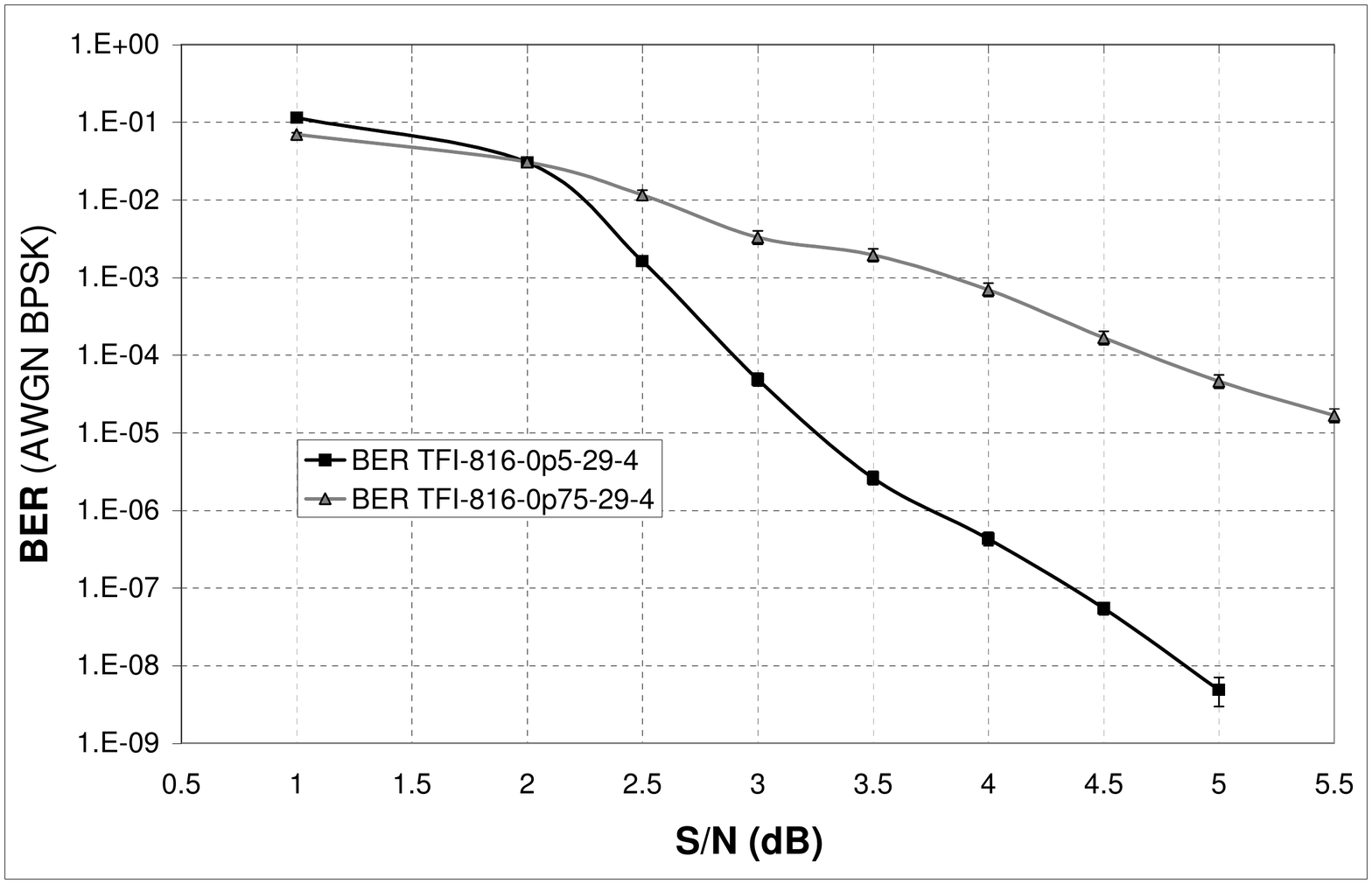}

\includegraphics[scale=.5]{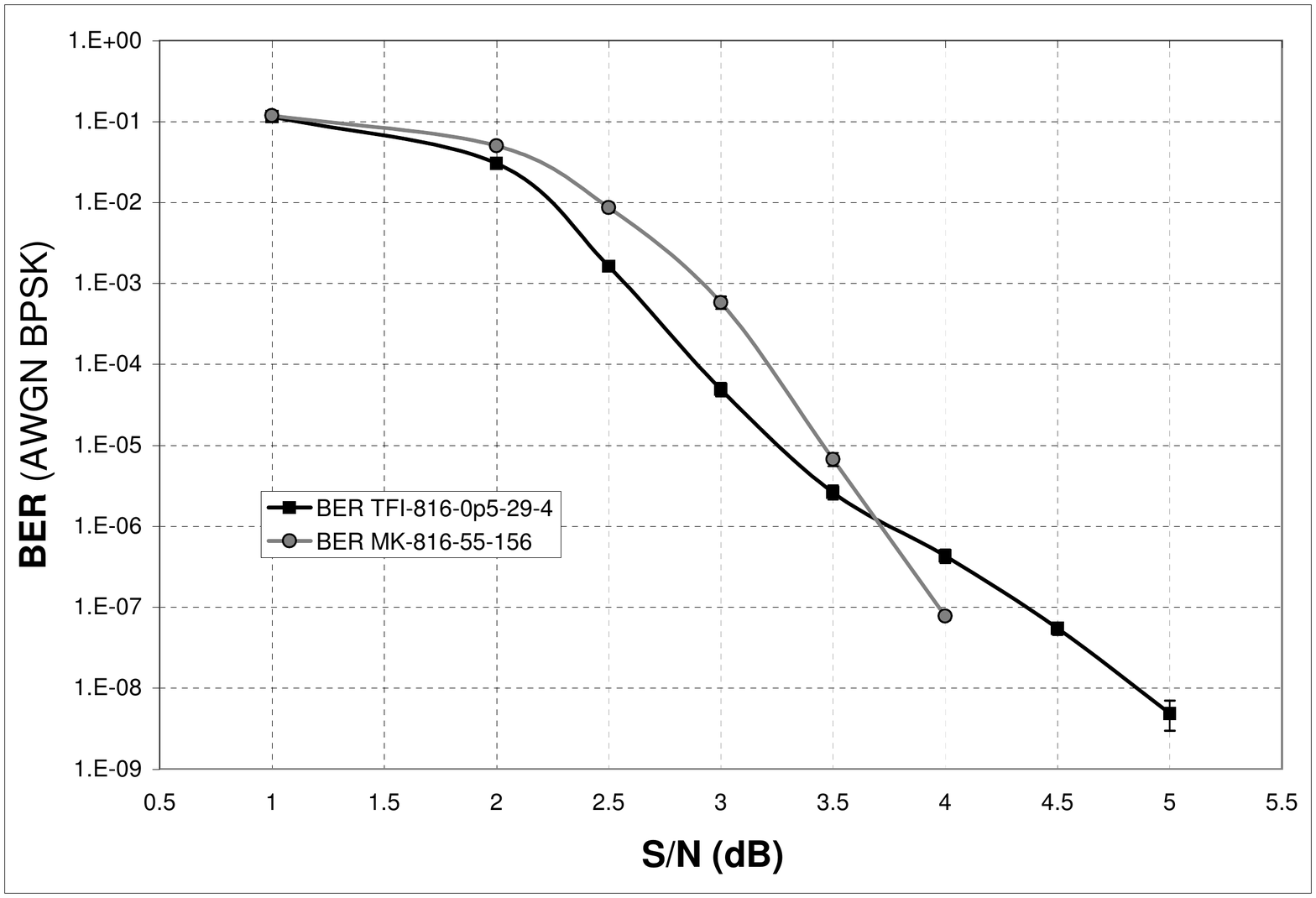}

\subsection{Industry Standards}\label{sec:tfi}

Here  comparisons of Bit Error Rate (BER) and Block Error Rate (BLER)
performance of LDPC codes defined in the 802.11n \& 802.16e standard 
with equivalent codes generated by present method are given. 

\subsubsection{Case 1, 802.11n:}
\includegraphics[scale=.64]{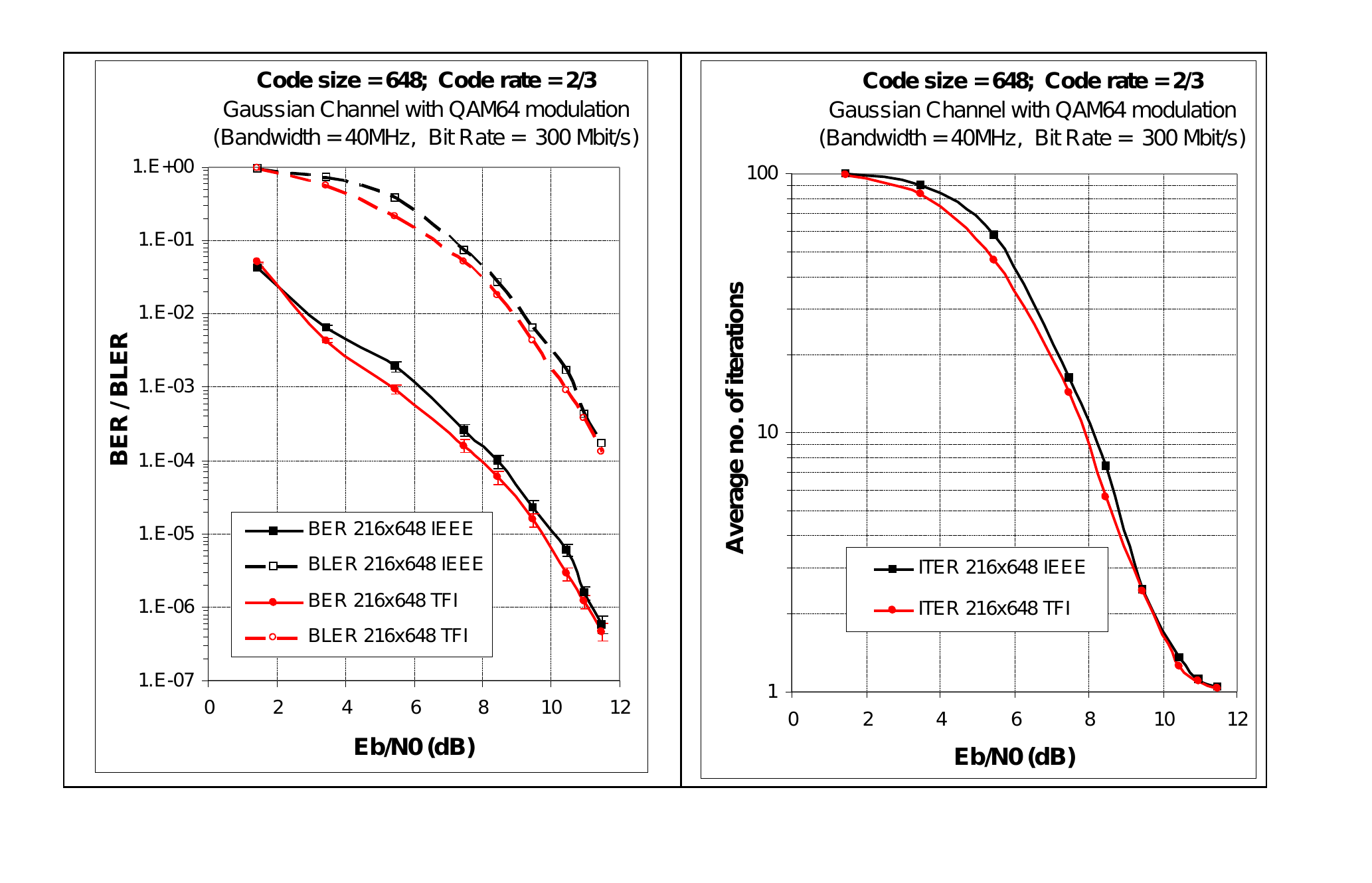}

 Matrix size: 216 by 648; Code size = 648; Code rate = 2/3.
Matrix structure: The last 189 columns contain a `staircase' structure which is
identical as in the IEEE matrix. The remaining part was generated
using the group ring algebraic algorithm which takes 15 initial parameters as
input. 
\subsubsection{Case 2, (802.11n):} 
\includegraphics[scale=.64]{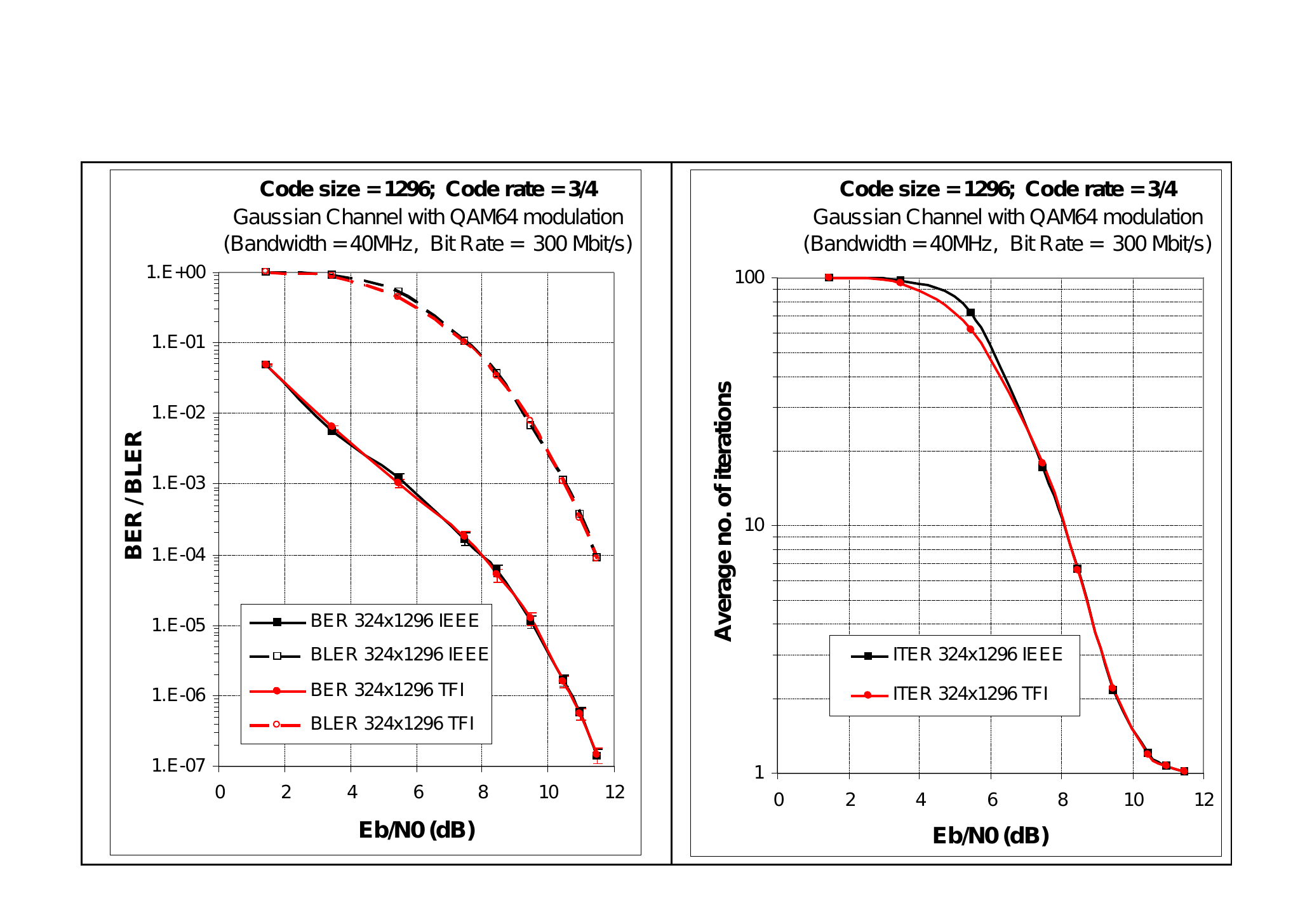}

Matrix size: 324 by 1296; Code size = 1296; Code rate = 3/4.
Matrix structure: The last 270 columns contain a `staircase' structure
which is identical  as in the IEEE matrix. The remaining part
was generated using the  algebraic group ring algorithm which takes 17 initial
parameters as input.

\subsubsection{Case 3, (802.16e):} 

\includegraphics[scale=.7]{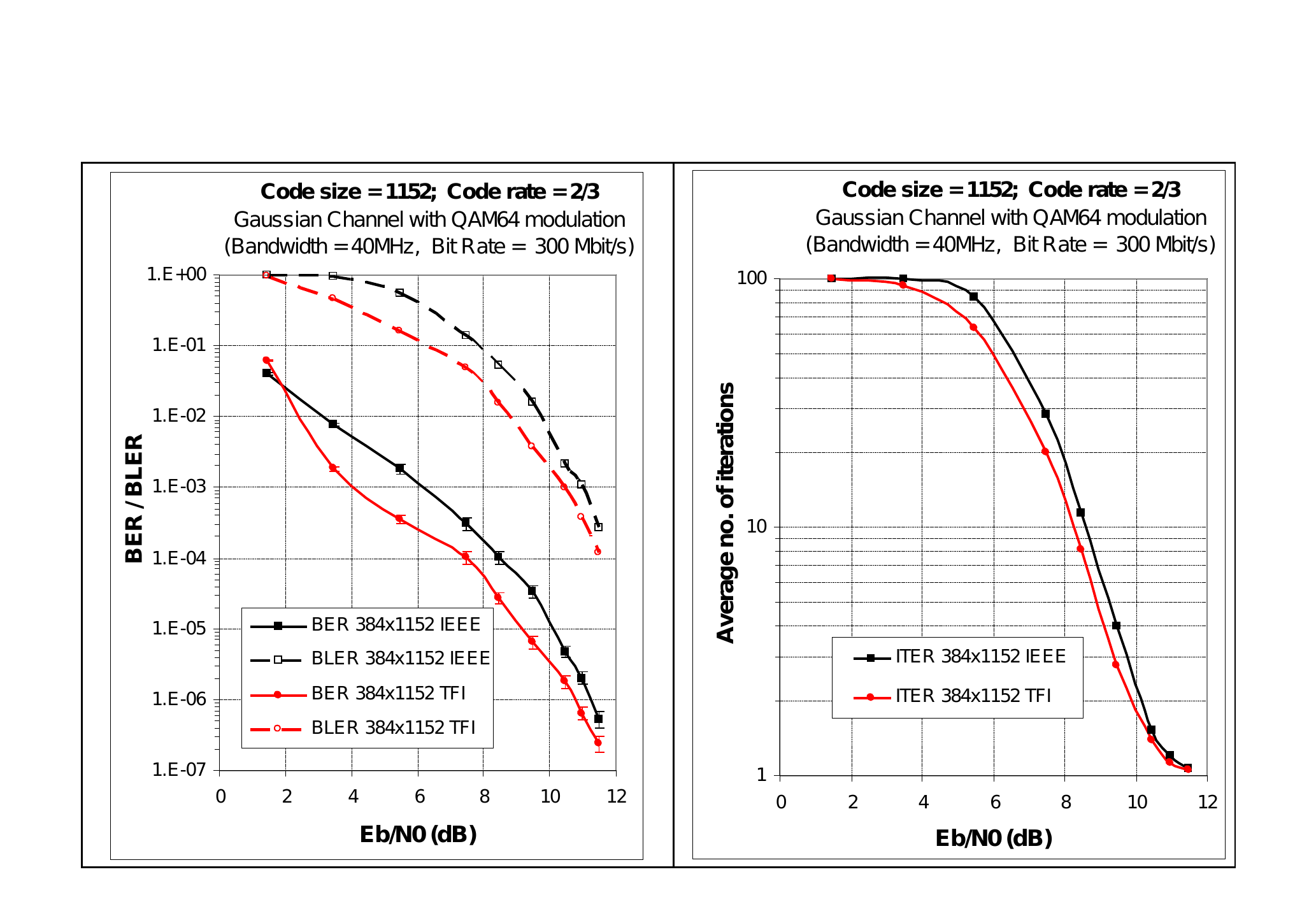} 

Matrix size: 384 by 1152; Code size
= 1152; Code rate = 2/3.
Matrix structure: The last 336 columns contain a `staircase'  
structure which is identical as in the IEEE matrix. The remaining part
was generated using the  algebraic group ring algorithm which takes 17 initial
parameters as input.

\end{document}